\documentclass[11pt]{amsart}

\usepackage[T1]{fontenc}
\usepackage[utf8]{inputenc}
\usepackage{geometry}
\usepackage{amssymb}
\usepackage{amsmath}
\usepackage{mathtools}
\usepackage{xcolor}
\usepackage[colorlinks=true,citecolor=red,linkcolor=blue,urlcolor=red]{hyperref}

\DeclareMathOperator{\rank}{rank}

\newtheorem{thm}{Theorem}[section]

\newtheorem{prop}[thm]{Proposition}

\theoremstyle{definition}

\newtheorem{rem}[thm]{Remark}

\numberwithin{equation}{section}

\begin{document}

\title[A counterexample to the odd-dimensional rank bound]{A counterexample to the odd-dimensional rank bound for abelian \texorpdfstring{$p$}{p}-group actions}

\author{Jihao Liu}
\author{Yanze Wang}

\address{Department of Mathematics, Peking University, No. 5 Yiheyuan Road, Haidian District, Beijing 100871, China}
\address{Beijing International Center for Mathematical Research, Peking University, No. 5 Yiheyuan Road, Haidian District, Beijing 100871, China}
\email{liujihao@math.pku.edu.cn}

\address{Academy of Mathematics and Systems Science, Chinese Academy of Sciences, No. 55 Zhonguancun East Road, Haidian District, Beijing, 100190, China}
\email{wangyanze@amss.ac.cn}

\subjclass[2020]{14J32, 14L30, 14J50, 20K01}
\keywords{Abelian $p$-group actions, rank bound, Calabi--Yau threefold, Fermat quintic, faithful action}
\date{\today}

\begin{abstract}
  We disprove the odd-dimensional extension, conjectured by Moraga and recorded by Koll\'ar and Zhuang, of the rank bound for faithful abelian $p$-group actions on smooth Calabi--Yau varieties.  The main result of this paper was obtained by ChatGPT 5.5 pro, and the Danus system based on the Rethlas system.
\end{abstract}

\maketitle

\section{Introduction}\label{sec:introduction}

A recurring theme in the study of finite group actions on algebraic varieties is to bound the size of an abelian symmetry group in terms of the dimension of the variety on which it acts. For actions of abelian $p$-groups, the natural measure of size is the rank, the minimal number of generators; equivalently, for an elementary abelian $p$-group it is the dimension over the field $\mathbb F_p$ with $p$ elements.

Koll\'ar and Zhuang \cite[Corollary~9]{KZ26} study the essential dimension of isogenies and, in the course of that work, record a rank bound for faithful abelian $p$-group actions. For a finite abelian $p$-group $G$ acting faithfully on a smooth projective variety $X$ of dimension $n$ over a field of characteristic zero, their Corollary~9 yields, under the hypothesis that $\chi(X,\mathcal O_X)=\pm 1$ or $\pm 2$, the bound
\begin{equation}\label{eq:kz-bound}
  \rank G\leq
  \begin{cases}
    \dfrac{p}{p-1}\,n, & p\text{ odd},\\[2mm]
    2n+1, & p=2.
  \end{cases}
\end{equation}
As Koll\'ar and Zhuang observe, their method does not control the rank when $\chi(X,\mathcal O_X)=0$, which is precisely the situation of a Calabi--Yau threefold, yet Moraga conjectured that the bounds \eqref{eq:kz-bound} hold in odd dimensions as well \cite[Remark 22]{KZ26}. In particular, the bound should hold for Calabi-Yau threefolds. The purpose of this note is to show that the odd-prime case of this conjectural odd-dimensional extension is false.

\begin{thm}\label{thm:main}
  Let $X\subset\mathbb P^4_{\mathbb C}$ be the Fermat quintic hypersurface defined by $x_0^5+x_1^5+x_2^5+x_3^5+x_4^5=0$, and let $G=(\mu_5)^5/\Delta$ act on $X$ by diagonal coordinate scalings, where $\mu_5$ is the group of complex fifth roots of unity and $\Delta$ is its scalar diagonal subgroup. Then $X$ is a smooth projective strict Calabi--Yau threefold over $\mathbb C$, the group $G$ is an elementary abelian $5$-group of rank $4$ acting faithfully on $X$, and, for $p=5$ and $n=\dim X=3$,
  \begin{equation}\label{eq:main}
    \rank G=4>\frac{15}{4}=\frac{p}{p-1}\,n.
  \end{equation}
  In particular, the conjecture of Moraga in \cite[Remark~22]{KZ26} is false.
\end{thm}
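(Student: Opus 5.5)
The plan is to verify each claim in Theorem~\ref{thm:main} directly, in the order smoothness, Calabi--Yau property, group structure, faithfulness, and inequality.

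\emph{Smoothness.} The partial derivatives of $F=\sum x_i^5$ are $5x_i^4$. They vanish simultaneously only at the origin, so the projective hypersurface $X$ is smooth. It is a quintic in $\mathbb P^4$, hence a threefold.

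\emph{Strict Calabi--Yau.} By adjunction, $\omega_X\cong\mathcal O_X(5-5)=\mathcal O_X$. Lefschetz, or the long exact sequence of $0\to\mathcal O_{\mathbb P^4}(-5)\to\mathcal O_{\mathbb P^4}\to\mathcal O_X\to 0$ together with Bott vanishing, gives $H^i(X,\mathcal O_X)=0$ for $i=1,2$. Lefschetz also gives $\pi_1(X)=1$. This covers any of the usual definitions of ``strict'': trivial canonical bundle, $h^{i,0}=0$ for $0<i<3$, and simply connected. In particular $\chi(X,\mathcal O_X)=0$, which is consistent with this being outside the range of \eqref{eq:kz-bound}.

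\emph{Group structure and action.} An element $(\zeta_0,\dots,\zeta_4)\in(\mu_5)^5$ acts by $x_i\mapsto\zeta_ix_i$. This preserves $F$ because $\zeta_i^5=1$, so it preserves $X$. The diagonal $\Delta$ acts trivially on $\mathbb P^4$, so the action descends to $G=(\mu_5)^5/\Delta$. Since $\Delta\cong\mu_5$, the group $G$ has order $5^4$ and exponent $5$. The map $(\zeta_i)\mapsto(\zeta_i\zeta_0^{-1})_{i=1}^4$ identifies it with $(\mathbb Z/5)^4$, so $\rank G=4$.

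\emph{Faithfulness.} This is the only step needing real care. Suppose $g=[\zeta]$ acts trivially on $X$. Consider the point $P_{ij}=[\dots:1:\dots:-1:\dots]$, with $1$ in coordinate $i$, $-1$ in coordinate $j$, and zeros elsewhere. Since $1+(-1)^5=0$, it lies on $X$. Its image under $g$ has $\zeta_i$ in coordinate $i$ and $-\zeta_j$ in coordinate $j$. This equals $P_{ij}$ projectively iff $\zeta_i=\zeta_j$. Ranging over all pairs $i,j$ forces all $\zeta_i$ to be equal, so $g\in\Delta$ is trivial in $G$. Equivalently, one can observe that $X$ spans $\mathbb P^4$ and is not contained in a union of coordinate-eigenspace linear subspaces; the explicit points above are the cleanest route.

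\emph{Inequality and conclusion.} Finally $\frac{p}{p-1}n=\frac54\cdot 3=\frac{15}{4}<4$. This contradicts the conjectured odd-dimensional extension of \eqref{eq:kz-bound} for $p=5$, $n=3$.

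I do not expect any serious obstacle. The main point to state precisely is which definition of ``strict Calabi--Yau'' is used, and to cite Lefschetz's hyperplane theorem accordingly.
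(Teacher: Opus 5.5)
Your proposal is correct and follows the paper's proof essentially step by step: the Jacobian criterion for smoothness, adjunction plus the hypersurface exact sequence, the map $(\zeta_i)\mapsto(\zeta_i\zeta_0^{-1})$ with kernel $\Delta$, and the two-coordinate points $P_{ij}$ for faithfulness. Your extra appeal to Lefschetz for $\pi_1(X)=1$ is correct but not needed, since the paper's notion of strict Calabi--Yau only uses $\omega_X\cong\mathcal O_X$ and $H^1(X,\mathcal O_X)=H^2(X,\mathcal O_X)=0$.
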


\begin{rem}
The sketch of the proof of the main result of this paper was obtained by Chatgpt 5.5 pro, and later summed up, verified, and properly written by the Danus system, a specialized agent built on Rethlas and substantially more capable for fundamental mathematical research based on the Rethlas system. Human verification and polishing were done afterwards. See \cite{Ju26} for a detailed introduction to the Rethlas system. Due to the limitation of automated systems, it is possible that we have missed some related references in the literature, and we welcome any comments from experts.
\end{rem}

\subsection*{Outline of the argument}
The proof is entirely elementary. The Fermat quintic $X$ is a smooth degree-$5$ hypersurface in $\mathbb P^4$, so it is a smooth projective threefold, and adjunction together with the hypersurface exact sequence shows that it is a strict Calabi--Yau threefold (Section~\ref{sec:variety}). The group $(\mu_5)^5$ rescaling the five homogeneous coordinates by fifth roots of unity preserves the Fermat equation, the scalar diagonal acts trivially on projective space, and the quotient $G$ is an elementary abelian $5$-group of rank $4$ (Section~\ref{sec:group}). Testing on the two-coordinate points of $X$ forces every coordinate scaling that acts trivially on $X$ to be scalar, so the induced $G$-action is faithful. Finally, the arithmetic inequality $4>15/4$ furnishes the violation of \eqref{eq:kz-bound} and completes the proof of Theorem~\ref{thm:main} (Section~\ref{sec:proof}).

\subsection*{Acknowledgements}
The first author was partially supported by the National Key R\&D Program of China \#\allowbreak 2024YFA1014400. The first author would like to thank the Rethlas team, namely Haocheng Ju, Jiedong Jiang, Shurui Liu, Guoxiong Gao, Yuefeng Wang, Zeming Sun, Bin Wu, Liang Xiao, and Bin Dong, for their contributions to the development of Rethlas and its customized version used for the problem studied in this paper. The first author would like to thank Ruochuan Liu and Gang Tian for constant support and encouragement. The second author would like to thank Yifei Chen for constant support.

\section{The Fermat quintic threefold}\label{sec:variety}

Throughout, we work over the complex numbers $\mathbb C$. Let $\mathbb P^4_{\mathbb C}$ have homogeneous coordinates $x_0,x_1,x_2,x_3,x_4$, write
\[
  F=x_0^5+x_1^5+x_2^5+x_3^5+x_4^5,
\]
and let $X=V(F)\subset\mathbb P^4_{\mathbb C}$ be its zero locus. In this section we record that $X$ is a smooth projective strict Calabi--Yau threefold. This is well-known but our system still writes a proof of it so we record the proof here.

\begin{prop}\label{prop:cy}
  The variety $X$ is a smooth projective threefold over $\mathbb C$ with $\omega_X\cong\mathcal O_X$ and
  \[
    H^1(X,\mathcal O_X)=H^2(X,\mathcal O_X)=0.
  \]
\end{prop}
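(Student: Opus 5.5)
The plan is to handle the three assertions in turn: smoothness via the Jacobian criterion, triviality of $\omega_X$ via adjunction, and the vanishing of $H^1$ and $H^2$ via the ideal-sheaf sequence and Bott's computation of cohomology on projective space.

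\textbf{Smoothness and dimension.} Since $F$ is a nonconstant homogeneous form, $X=V(F)$ is a closed subscheme of $\mathbb P^4_{\mathbb C}$ of pure dimension $3$, and in particular it is projective. For smoothness I would apply the Jacobian criterion. The partial derivatives are $\partial F/\partial x_i=5x_i^4$. Because we are in characteristic zero, these vanish simultaneously only when every $x_i=0$, and that is not a point of $\mathbb P^4$. Hence $X$ is smooth of dimension $3$. It is also connected, and in fact integral: one can see this from the vanishing of $H^1(\mathbb P^4,\mathcal O(-5))$ in the sequence below, which gives $H^0(X,\mathcal O_X)=\mathbb C$. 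Alternatively, $F$ is irreducible because the hypersurface is smooth and has dimension at least $1$.

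\textbf{Canonical sheaf.} Since $X$ is a Cartier divisor of degree $5$ with normal bundle $\mathcal O_X(5)$, adjunction together with $\omega_{\mathbb P^4}\cong\mathcal O_{\mathbb P^4}(-5)$ gives
\[
\omega_X\cong(\omega_{\mathbb P^4}\otimes\mathcal O_{\mathbb P^4}(5))|_X\cong\mathcal O_X.
\]

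\textbf{Cohomology vanishing.} Here I would use the exact sequence
\[
0\to\mathcal O_{\mathbb P^4}(-5)\xrightarrow{\cdot F}\mathcal O_{\mathbb P^4}\to\mathcal O_X\to0.
\]
By Serre's computation of $H^i(\mathbb P^4,\mathcal O(d))$, the groups $H^i(\mathbb P^4,\mathcal O(d))$ vanish for $0<i<4$ and every $d$. For $i=1,2$, the long exact sequence therefore places $H^i(X,\mathcal O_X)$ between $H^i(\mathbb P^4,\mathcal O)$ and $H^{i+1}(\mathbb P^4,\mathcal O(-5))$, and both of these are zero for $i=1,2$, since $i+1\le3$. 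Hence $H^1(X,\mathcal O_X)=H^2(X,\mathcal O_X)=0$.

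None of these steps is a real obstacle; all of them are standard. The only point needing care is to justify that $X$ is a variety, that is, integral, and this is settled by the connectedness argument given in the first step.
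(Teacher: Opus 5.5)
Your proposal is correct and follows essentially the same route as the paper: the Jacobian criterion for smoothness, adjunction with $\omega_{\mathbb P^4}\cong\mathcal O_{\mathbb P^4}(-5)$ for $\omega_X\cong\mathcal O_X$, and the long exact sequence of $0\to\mathcal O_{\mathbb P^4}(-5)\to\mathcal O_{\mathbb P^4}\to\mathcal O_X\to 0$ for the vanishing of $H^1$ and $H^2$. Your extra step, that $H^0(X,\mathcal O_X)=\mathbb C$ gives connectedness and hence, with smoothness, integrality, properly justifies the irreducibility that the paper only asserts.
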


\begin{proof}
  The polynomial $F$ is homogeneous, so $X=V(F)$ is a closed subvariety of $\mathbb P^4_{\mathbb C}$ and hence projective over $\mathbb C$. As $F$ is a single nonzero homogeneous polynomial, $X$ is a hypersurface, so once smoothness is established every irreducible component has dimension $4-1=3$.

  We prove smoothness by the Jacobian criterion. The partial derivatives of $F$ are
  \[
    \partial F/\partial x_i=5x_i^4\qquad(0\leq i\leq 4).
  \]
  Since the characteristic of $\mathbb C$ is zero, the scalar $5$ is nonzero. If all five partial derivatives vanished at a point represented by a vector $(x_0,\dots,x_4)$, then $x_i=0$ for every $i$, which is impossible for a point of projective space. Hence $X$ has no singular point, so $X$ is an irreducible smooth threefold.

  We next verify the strict Calabi--Yau conditions. Since $X$ is a smooth hypersurface of degree $5$ in $\mathbb P^4$, the adjunction formula gives
  \[
    \omega_X\cong\bigl(\omega_{\mathbb P^4}\otimes\mathcal O_{\mathbb P^4}(5)\bigr)\big|_X.
  \]
  Because $\omega_{\mathbb P^4}\cong\mathcal O_{\mathbb P^4}(-5)$, this yields $\omega_X\cong\mathcal O_X$. For the cohomology, consider the hypersurface exact sequence
  \begin{equation}\label{eq:hypersurface-ses}
    0\to\mathcal O_{\mathbb P^4}(-5)\xrightarrow{\ \cdot F\ }\mathcal O_{\mathbb P^4}\to\mathcal O_X\to 0.
  \end{equation}
  The standard cohomology of line bundles on projective space gives
  \[
    H^1(\mathbb P^4,\mathcal O_{\mathbb P^4})=H^2(\mathbb P^4,\mathcal O_{\mathbb P^4})=0
  \]
  and
  \[
    H^2(\mathbb P^4,\mathcal O_{\mathbb P^4}(-5))=H^3(\mathbb P^4,\mathcal O_{\mathbb P^4}(-5))=0.
  \]
  The long exact cohomology sequence associated with \eqref{eq:hypersurface-ses} then yields $H^1(X,\mathcal O_X)=0$ and $H^2(X,\mathcal O_X)=0$. Thus $X$ is a smooth projective strict Calabi--Yau threefold over $\mathbb C$.
\end{proof}

\section{The group and its action}\label{sec:group}

Let $\mu_5=\{\lambda\in\mathbb C^\times:\lambda^5=1\}$ be the group of complex fifth roots of unity, and let
\[
  D=(\mu_5)^5
\]
act on $\mathbb P^4_{\mathbb C}$ by
\begin{equation}\label{eq:torus-action}
  (\lambda_0,\dots,\lambda_4)\cdot[x_0:\cdots:x_4]=[\lambda_0x_0:\cdots:\lambda_4x_4].
\end{equation}
Let
\[
  \Delta=\{(\lambda,\lambda,\lambda,\lambda,\lambda):\lambda\in\mu_5\}\subset D
\]
be the scalar diagonal subgroup, and set $G=D/\Delta$.

\begin{prop}\label{prop:group}
  The action \eqref{eq:torus-action} preserves $X$ and the subgroup $\Delta$ acts trivially on $X$, so the action descends to an action of $G=D/\Delta$ on $X$. The group $G$ is an elementary abelian $5$-group isomorphic to $(\mathbb Z/5\mathbb Z)^4$, and $\rank G=4$.
\end{prop}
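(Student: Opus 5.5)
The plan is to verify the four assertions of Proposition~\ref{prop:group} in order: preservation of $X$, triviality of $\Delta$, the descended action, and the group structure of $G$.

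\emph{Preservation of $X$.} For $\lambda=(\lambda_0,\dots,\lambda_4)\in D$ and any representative $(x_0,\dots,x_4)$ of a point of $\mathbb P^4$ we have
\[
  F(\lambda_0x_0,\dots,\lambda_4x_4)=\sum_i\lambda_i^5x_i^5=F(x_0,\dots,x_4),
\]
since $\lambda_i^5=1$. Thus $F$ is $D$-invariant, so $V(F)=X$ is $D$-stable. The action \eqref{eq:torus-action} is well defined on $\mathbb P^4$ because it is linear and commutes with scalar multiplication. Each element acts by a projective linear automorphism, and these automorphisms restrict to automorphisms of $X$. The restriction is therefore an action of $D$ on $X$ by automorphisms.

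\emph{Triviality of $\Delta$ and descent.} The element $(\lambda,\dots,\lambda)\in\Delta$ sends $[x_0:\cdots:x_4]$ to $[\lambda x_0:\cdots:\lambda x_4]=[x_0:\cdots:x_4]$. Hence $\Delta$ lies in the kernel of $D\to\operatorname{Aut}(X)$. The homomorphism $D\to\operatorname{Aut}(X)$ therefore factors through the quotient $G=D/\Delta$, which gives the descended action.

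\emph{Group structure.} Since $\mu_5\cong\mathbb Z/5\mathbb Z$, choosing a primitive root $\zeta$ gives $D\cong(\mathbb Z/5\mathbb Z)^5$. Under this isomorphism $\Delta$ becomes the diagonal line spanned by $(1,1,1,1,1)$ in the $\mathbb F_5$-vector space $\mathbb F_5^5$. The quotient is an $\mathbb F_5$-vector space of dimension $4$. Explicitly, the map $(a_0,\dots,a_4)\mapsto(a_1-a_0,\dots,a_4-a_0)$ is a surjection $\mathbb F_5^5\to\mathbb F_5^4$ with kernel exactly the diagonal. Hence $G\cong(\mathbb Z/5\mathbb Z)^4$. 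It is elementary abelian because every element of $D$ has order dividing $5$. Its rank, the minimal number of generators, equals $\dim_{\mathbb F_5}G=4$: a generating set of $G$ spans it as an $\mathbb F_5$-vector space, so it needs at least $4$ elements, and the images of the four standard basis vectors $e_1,\dots,e_4$ generate $G$.

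No step is a real obstacle. The only point needing care is to keep this proposition separate from faithfulness: here we only claim that $\Delta$ acts trivially, not that it is the full kernel. Faithfulness of $G$ is proved separately later by testing on the two-coordinate points of $X$.
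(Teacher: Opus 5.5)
Your proposal is correct and follows essentially the same route as the paper: invariance of $F$ from $\lambda_i^5=1$, triviality of the scalar diagonal on $\mathbb P^4$, and identification of $G$ via a surjection onto a rank-$4$ group with kernel $\Delta$. Your additive map $(a_0,\dots,a_4)\mapsto(a_1-a_0,\dots,a_4-a_0)$ is the paper's $(\lambda_1/\lambda_0,\dots,\lambda_4/\lambda_0)$ written in exponents, and your explicit argument that the minimal number of generators equals $\dim_{\mathbb F_5}G$ is a small addition the paper leaves implicit.
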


\begin{proof}
  For every $(\lambda_0,\dots,\lambda_4)\in D$ one has $\lambda_i^5=1$ for each $i$, so
  \[
    F(\lambda_0x_0,\dots,\lambda_4x_4)=\sum_{i=0}^4\lambda_i^5x_i^5=\sum_{i=0}^4x_i^5=F(x_0,\dots,x_4).
  \]
  Hence the action \eqref{eq:torus-action} preserves $X$. Every element of $\Delta$ multiplies all homogeneous coordinates by the same nonzero scalar, hence acts trivially on $\mathbb P^4_{\mathbb C}$ and in particular on $X$. Therefore the action of $D$ on $X$ descends to an action of $G=D/\Delta$ on $X$.

  Since $\mu_5$ is cyclic of order $5$, the group $D=(\mu_5)^5$ is isomorphic to $(\mathbb Z/5\mathbb Z)^5$, and under this identification $\Delta$ is the one-dimensional subgroup generated by $(1,1,1,1,1)$. The homomorphism
  \[
    (\lambda_0,\dots,\lambda_4)\longmapsto(\lambda_1/\lambda_0,\lambda_2/\lambda_0,\lambda_3/\lambda_0,\lambda_4/\lambda_0)
  \]
  from $D$ to $(\mu_5)^4$ is surjective with kernel exactly $\Delta$, so $G\cong(\mathbb Z/5\mathbb Z)^4$. In particular $G$ is an elementary abelian $5$-group, and its rank, the minimal number of generators, equals $4$.
\end{proof}

We now prove that the induced action of $G$ on $X$ is faithful. The point of the argument is that the kernel of the $D$-action on $X$ is no larger than the scalar diagonal $\Delta$; two-coordinate points of $X$ detect every non-scalar diagonal element.

\begin{prop}\label{prop:faithful}
  The kernel of the action of $D$ on $X$ is exactly $\Delta$. Consequently, the induced action of $G=D/\Delta$ on $X$ is faithful.
\end{prop}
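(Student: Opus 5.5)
The plan is to prove the two inclusions separately. The inclusion $\Delta\subseteq\ker$ is already contained in Proposition~\ref{prop:group}: scalar elements act trivially on all of $\mathbb P^4_{\mathbb C}$, hence on $X$. The real content is the reverse inclusion. Let $\lambda=(\lambda_0,\dots,\lambda_4)\in D$ act trivially on $X$. We aim to show $\lambda_i=\lambda_j$ for every pair $i\neq j$, which forces $\lambda\in\Delta$.

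For the reverse inclusion we test $\lambda$ on two-coordinate points of $X$. Fix $i\neq j$ and choose $\zeta\in\mathbb C$ with $\zeta^5=-1$, for example $\zeta=-1$ itself. Let $P_{ij}$ be the point whose $i$-th coordinate is $1$, whose $j$-th coordinate is $\zeta$, and whose other coordinates are $0$. Then $F(P_{ij})=1+\zeta^5=0$, so $P_{ij}\in X$.

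Since $\lambda$ fixes $P_{ij}$, there is a nonzero scalar $c$ with $\lambda_ix_i=cx_i$ for the representing vector. Comparing the $i$-th coordinate gives $\lambda_i=c$. Comparing the $j$-th coordinate gives $\lambda_j\zeta=c\zeta$, and since $\zeta\neq0$ this means $\lambda_j=c$. Hence $\lambda_i=\lambda_j$, and letting $i,j$ range over all pairs shows that $\lambda$ is scalar, i.e.\ $\lambda\in\Delta$.

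For faithfulness, the action of $G$ on $X$ is induced from the action of $D$. An element of $G$ acting trivially therefore lifts to an element of $D$ acting trivially, which lies in $\Delta$, so the element is trivial in $G=D/\Delta$.

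There is no real obstacle. The only point needing care is that equality of points in projective space means proportionality of representing vectors, and that the chosen test point has two nonzero coordinates so that the proportionality constant can be compared across both of them.
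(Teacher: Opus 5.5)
Your proposal is correct and follows essentially the same argument as the paper: both test a trivially acting element on the two-coordinate points $P_{ij}\in X$ (the paper takes $\zeta=-1$ directly) and read off $\lambda_i=\lambda_j$ from proportionality of representing vectors. The passage from the kernel being $\Delta$ to faithfulness of $G=D/\Delta$ is also the same.
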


\begin{proof}
  By Proposition~\ref{prop:group}, $\Delta$ acts trivially on $X$, so $\Delta$ is contained in the kernel of the $D$-action. Conversely, suppose $d=(\lambda_0,\dots,\lambda_4)\in D$ acts trivially on $X$.

  Fix two distinct indices $i,j\in\{0,1,2,3,4\}$. Let $P_{ij}\in\mathbb P^4_{\mathbb C}$ be the point whose $i$-th coordinate is $1$, whose $j$-th coordinate is $-1$, and whose remaining coordinates are $0$. Then
  \[
    F(P_{ij})=1^5+(-1)^5=0,
  \]
  so $P_{ij}\in X$. Since $d$ acts trivially on $X$, we have $d\cdot P_{ij}=P_{ij}$ in $\mathbb P^4_{\mathbb C}$. Reading off the two nonzero coordinates, this equality says that the points
  \[
    [\lambda_i:-\lambda_j]\quad\text{and}\quad[1:-1]
  \]
  of $\mathbb P^1$ coincide. Thus $\lambda_i=\lambda_j$.

  The pair $i,j$ of distinct indices is arbitrary, so all five entries $\lambda_0,\dots,\lambda_4$ are equal. Therefore $d\in\Delta$, and the kernel of the $D$-action on $X$ is exactly $\Delta$. Since $G=D/\Delta$, the induced $G$-action on $X$ is faithful.
\end{proof}

\section{Proof of Theorem~\ref{thm:main}}\label{sec:proof}

\begin{proof}[Proof of Theorem~\ref{thm:main}]
  By Proposition~\ref{prop:cy}, the Fermat quintic $X=V(x_0^5+\cdots+x_4^5)\subset\mathbb P^4_{\mathbb C}$ is a smooth projective strict Calabi--Yau threefold over $\mathbb C$; in particular $\dim X=3$. By Proposition~\ref{prop:group}, the diagonal action of $D=(\mu_5)^5$ preserves $X$ and descends to an action of $G=D/\Delta$ on $X$, where $G\cong(\mathbb Z/5\mathbb Z)^4$ is an elementary abelian $5$-group with $\rank G=4$. By Proposition~\ref{prop:faithful}, this action of $G$ on $X$ is faithful.

  Take $p=5$ and $n=\dim X=3$. The bound \eqref{eq:kz-bound} predicted by Moraga in \cite[Remark~22]{KZ26} would give
  \[
    \rank G\leq\frac{p}{p-1}\,n=\frac{5}{4}\cdot 3=\frac{15}{4}.
  \]
  But $\rank G=4$, and $4>15/4$, which establishes \eqref{eq:main}. Hence the faithful action of $G$ on the smooth projective variety $X$ of odd dimension $n=3$ violates \eqref{eq:kz-bound} for the odd prime $p=5$, so  the conjecture of Moraga in \cite[Remark~22]{KZ26} is false.
\end{proof}

\begin{rem}\label{rem:scope}
The counterexample is a smooth projective strict Calabi--Yau threefold, so $\chi(X,\mathcal O_X)=0$ and the bound \eqref{eq:kz-bound} of \cite[Remark~22]{KZ26} is not available from their method. Theorem~\ref{thm:main} disproves the odd-prime case of the conjectural odd-dimensional extension recorded in \cite[Remark~22]{KZ26}; we make no claim about the $p=2$ case, nor about whether a modified rank bound might hold in odd dimensions.
\end{rem}

\begin{rem}
  In fact, for any odd prime $p$, we can take the hypersurface $X=V(\sum_{i=0}^{p-1} x_i^p)$ in $\mathbb{P}^{p-1}_{\mathbb{C}}$ and the group $G=(\mu_p)^{p}/\Delta \cong (\mathbb{Z} / p{\mathbb{Z} })^{p-1}$. Then we have $\rank G=p-1$, $n=\dim X=p-2$, and
\[
  \rank G-\frac{p}{p-1}n=(p-1)-\frac{p(p-2)}{p-1}=\frac{1}{p-1}>0,
\]
so $\rank G > \frac{p}{p-1}n$.
\end{rem}

\end{document}